\newcommand\mop[1]{\operatorname{#1}}
\def\bC{\mathbb{C}}
\def\bN{\mathbb{N}}
\def\bP{\mathbb{P}}
\def\leq{\leqslant}
\def\geq{\geqslant}
\def\cI{\mathcal{I}}
\newcommand{\sst}{{\mathrm{ss}}}
\newcommand{\dquo}{/\!/}
\DeclareMathOperator{\GL}{GL}
\DeclareMathOperator{\SL}{SL}
\DeclareMathOperator{\End}{End}
\newcommand{\Bsk}{B_\mathrm{skew}}
\def\detdrei{\mathrm{det}_3}
\def\eqdef{\stackrel{\text{def}}{=}}
\newcommand{\st}{\mathrel{}\middle|\mathrel{}}
\newtheorem{theorem}{Theorem}
\newtheorem{lemma}[theorem]{Lemma}
\theoremstyle{definition}
\theoremstyle{remark}
\newcommand{\newabstract}[1]{%
  \par\bigskip
  \csname otherlanguage*\endcsname{#1}%
  \csname captions#1\endcsname
  \item[\hskip\labelsep\scshape\abstractname.]
}
\title[The $3\times 3$ determinant polynomial]%
{The boundary of the orbit of\\the 3 by 3 determinant polynomial\\
  \rule[\smallskipamount]{2cm}{.5pt}\\
La frontière de l'orbite\\
du polynôme déterminant 3 par 3.
}
\author[J.~Hüttenhain]{Jesko Hüttenhain}
\author[P.~Lairez]{Pierre Lairez}
\address{
  Technische Universität Berlin\\
  %Fakultät II, Sekrariat 3-2\\
  Straße des 17. Juni 136\\
  10623 Berlin\\
  Germany
}
\thanks{Partially funded by the research grant BU~1371/2-2 of the Deutsche Forschungsgemeinschaft.}
\email[J.~Hüttenhain]{ jesko@math.tu-berlin.de }
\email[P.~Lairez]{ pierre@lairez.fr }
\subjclass[2010]{14L35 (14E05)}
\begin{document}

\begin{abstract}
  We consider the $3\times 3$ determinant polynomial and we describe the limit
  points of the set of all polynomials obtained from the determinant polynomial
  by linear change of variables.  This answers a question of J.~M.~Landsberg.

  \newabstract{french}
  Nous étudions le polynôme donné par le déterminant~$3\times 3$ et décrivons l'adhérence
  de l'ensemble des polynômes obtenus par changements de variables linéaires à partir de ce déterminant,
  ce qui répond à une question de J.~M.~Lansberg.
\end{abstract}

\maketitle

\section*{Introduction}

\Textcite{MulSoh01} propose, in their \emph{geometric complexity theory}, to
study the geometry of the orbit closure of some polynomials under linear change
of variables, and especially, the determinant polynomial.  Yet, very few
explicit results describing the geometry are known in low dimension.  The
purpose of this work is to describe the boundary of the orbit of the $3\times
3$ determinant, that is, the set of limit points of the orbit that are not in the
orbit.

Let~$\detdrei$ be the polynomial
\[ \detdrei \eqdef \det
  \begin{pmatrix}
    x_1 & x_2 & x_3 \\
    x_4 & x_5 & x_6 \\
    x_7 & x_8 & x_9
  \end{pmatrix}  \in \bC[x_1,\dotsc,x_9],
  %= x_1x_5x_9 + x_2x_6x_7 + x_3x_4x_8 - x_2x_4x_9 - x_3x_5x_7 - x_1x_6x_8,
  \]
  which we consider as a homogeneous form of degree~$3$ on the space~$\bC^{3\times 3}$ of~$3\times 3$ matrices, denoted~$W$.
Let~$\bC[W]_3$ denote the $165$-dimensional space of all homogeneous forms of degree~$3$ on~$W$.
The group~$\GL(W)$ acts on~$\bC[W]_3$ by right composition. For a nonzero~$P\in \bC[W]_3$,
let~$\Omega(P)$ denote the (projective) orbit of~$P$, namely the set of all~$[P \circ a] \in \bP(\bC[W]_3)$, with~$a\in \GL(W)$.
The \emph{boundary} of the orbit of~$P$, denoted~$\partial\Omega(P)$, is~$\overline{ \Omega(P) } \setminus \Omega(P)$,
where~$\overline{\Omega(P)}$, denoted also~$\overline\Omega(P)$, is the Zariski closure of the orbit in~$\bP(\bC[W]_3)$.

Our main result is a description of~$\partial\Omega(\detdrei)$ that answers a
question of \textcite[Problem~5.4]{La14}: The two known components are the only
ones. 
In \S\ref{sec:constr} we explain the construction of the two components.
Our contribution lies in~\S\ref{sec:only2} where we show that there is no other component.

\begin{theorem}
  The boundary~$\partial \Omega(\detdrei)$ has exactly two irreducible components:
  \begin{itemize}[leftmargin=4ex]
    \item The orbit closure of the determinant of the generic traceless matrix, namely
      \[ P_1 \eqdef \det \begin{pmatrix}
          x_1 & x_2 & x_3 \\
          x_4 & x_5 & x_6 \\
          x_7 & x_8 & -x_1-x_5
        \end{pmatrix}; \]
    \item The orbit closure of the universal homogeneous polynomial of degree two in three variables, namely
      \[ P_2 \eqdef x_4 \cdot x_1^2 + x_5\cdot x_2^2 + x_6\cdot x_3^2 + x_7\cdot x_1x_2 + x_8\cdot x_2x_3 + x_9\cdot x_1x_3. \]
  \end{itemize}
  \label{theo:main}
\end{theorem}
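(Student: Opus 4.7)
The construction of \S\ref{sec:constr} shows that $\overline{\Omega}(P_1)$ and $\overline{\Omega}(P_2)$ are two distinct irreducible components of $\partial\Omega(\detdrei)$. The remaining task of \S\ref{sec:only2} is to prove that every $P \in \partial\Omega(\detdrei)$ lies in $\overline{\Omega}(P_1) \cup \overline{\Omega}(P_2)$; combined with the construction, this implies these are the only components.

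The approach rests on the fact that any boundary point of a $\GL(W)$-orbit is reachable by a one-parameter subgroup: given $P \in \partial\Omega(\detdrei)$, take an algebraic curve in $\Omega(\detdrei)$ converging to $P$, lift it to $\GL(W)$, and apply the Cartan decomposition $g_t = h_t \lambda(t) k_t$ with $h_t, k_t$ regular at $t=0$ and $\lambda$ a one-parameter subgroup of $\GL(W)$. This yields
\[ P = h_0 \cdot \lim_{t\to 0}\lambda(t)\bigl(k_0\cdot \detdrei\bigr), \]
so, up to the $\GL(W)$-action, every boundary point is an \emph{initial form} of some $Q = k_0\cdot\detdrei \in \Omega(\detdrei)$ relative to a diagonal 1-PSG $\lambda(t)=\operatorname{diag}(t^{w_1},\dotsc,t^{w_9})$, obtained by keeping only the monomials of minimal $w$-weight in $Q$.

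Next I would exploit the symmetries to reduce the combinatorics. On the right, the projective stabilizer of $\detdrei$ in $\GL(W)$ is the $18$-dimensional group generated by the left--right multiplications $M\mapsto AMB$ and by transposition, so $k_0$ need only be chosen modulo this stabilizer; on the left, the Weyl group of the diagonal torus permutes coordinates and lets us sort the weights. The goal becomes to enumerate, up to these symmetries, the pairs $(w, k_0)$ for which the initial form contributes a \emph{codimension-one} stratum of $\partial\Omega(\detdrei)$, and to identify each such initial form with a point of $\overline{\Omega}(P_1)$ or $\overline{\Omega}(P_2)$.

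The main obstacle will be controlling this case analysis: the weight lattice is infinite and the normalized choice of $k_0$ is continuous, so structural input is needed to collapse the possibilities to a manageable list. I expect this input to come from a combination of dimension estimates -- most weight configurations produce strata of codimension $\geq 2$ in $\overline{\Omega}(\detdrei)$ and can be discarded -- and discrete invariants of the limit cubic that separate the two listed components and exclude a third, for instance the structure of the singular locus of the hypersurface $\{P=0\}\subset \bP(W)$, the rank profile along it, or the multiplicity of $P$ along distinguished loci.
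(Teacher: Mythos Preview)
Your proposal differs substantially from the paper's argument and, as written, does not constitute a proof: the step you yourself flag as ``the main obstacle''---the enumeration of weight vectors $(w_1,\dotsc,w_9)$ and coordinate changes $k_0$ whose initial forms produce codimension-one strata---is never carried out. You name dimension estimates and discrete invariants of the limit cubic as expected tools, but give no concrete mechanism that collapses the infinite weight lattice and the continuous family of $k_0$'s to a finite list. Without that, nothing excludes a third component.

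There is also a subtlety in your reduction step. From the Cartan decomposition $g_t=h_t\,\lambda(t)\,k_t$ you pass to $P=h_0\cdot\lim_{t\to0}\lambda(t)\bigl(k_0\cdot\detdrei\bigr)$, silently replacing $k_t$ by its value $k_0$. In general $\lim_{t\to0}\bigl[\lambda(t)\cdot(k_t\cdot\detdrei)\bigr]$ is \emph{not} the $w$-initial form of $k_0\cdot\detdrei$: the leading $t$-term of $\lambda(t)\cdot(k_t\cdot\detdrei)$ can be supported on monomials whose coefficient in $k_0\cdot\detdrei$ vanishes but appears at positive order in $t$. This is repairable (for instance by an iterated-degeneration argument), but it must be addressed before the ``initial form'' picture is valid.

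The paper takes a different route that avoids any weight enumeration. It studies the rational map $\varphi\colon\bP(\End W)\dashrightarrow\overline\Omega(\detdrei)$, $[a]\mapsto[\detdrei\circ a]$, restricted to the GIT-semistable locus $\bP(E)^{\sst}$ for the left action of the stabilizer $H=H(\detdrei)$, and resolves its indeterminacies by a single blowup. Two inputs do the real work: the classification of maximal linear spaces of singular $3\times 3$ matrices (Atkinson; Eisenbud--Harris) shows that on $\bP(E)^{\sst}$ the indeterminacy locus is the single smooth $H\times\GL(W)$-orbit $\Bsk$ coming from skew-symmetric matrices; and GIT (projectivity of the quotient $\bP(E)^{\sst}\dquo H$) makes the resolved map surjective onto $\overline\Omega(\detdrei)$. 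The preimage of the hypersurface $Z=\{\det a=0\}$ in the blowup then has exactly two irreducible components---the strict transform of $Z$ and the exceptional divisor over $\Bsk$---and since $\partial\Omega(\detdrei)$ is contained in the image of this preimage, it has at most two components.
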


The two components are different in nature: the first one is the orbit
closure of a polynomial in only eight variables and is included in the orbit of $[\detdrei]$ under the action of~$\End W$; the second is more subtle and is not included in the $\End W$-orbit of~$[\detdrei]$.
Both components have analogues in higher dimension and some results are known about them~\parencite{landsberg_hypersurfaces_2013}.

\subsection*{Acknowledgment}
We are grateful to Peter Bürgisser for his many insightful comments about this work.

\section{Construction of two components of the boundary}
\label{sec:constr}

For~$P \in \bC[W]_3 \setminus \left\{ 0 \right\}$, let~$H(P) \subset \GL(W)$ denote its stabilizer,
%of~$[P]\in \bP(\bC[W]_3)$,
that is
\[ H(P) \eqdef \left\{ a \in \GL(W) \st P\circ a = P \right\}. \]
The stabilizer~$H(\detdrei)$ is generated by the transposition map~$A\mapsto A^T$ and the
maps~$A \mapsto U A V$, with~$U$ and~$V$ in~$\SL(\bC^3)$ \autocite{Die49}.

\begin{lemma}  \label{lem:dimorbit}
  For any~$P \in \bC[W]_3$, $\dim \Omega(P) = 80 - \dim H(P)$,
  In particular, $\dim \Omega(\detdrei) = 64$ and $\dim \Omega(P_1) =\dim \Omega(P_2) = 63$.
\end{lemma}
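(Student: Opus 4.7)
The plan has two parts: establishing the general formula $\dim \Omega(P) = 80 - \dim H(P)$ by orbit–stabilizer, and then computing $\dim H(P_i)$ for each of the three polynomials. For the formula, I consider the orbit map $\phi \colon \GL(W) \to \bP(\bC[W]_3)$, $a \mapsto [P \circ a]$, whose image is $\Omega(P)$. Its fiber over $[P]$ is the projective stabilizer $\tilde H(P) \eqdef \{ a \in \GL(W) : P \circ a \in \bC^* \cdot P \}$, which fits in the short exact sequence
\[ 1 \longrightarrow H(P) \longrightarrow \tilde H(P) \xrightarrow{\chi} \bC^* \longrightarrow 1, \]
where $\chi(a)$ is the scalar such that $P \circ a = \chi(a) P$. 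The character $\chi$ is surjective because $P \circ (c\,\mathrm{Id}_W) = c^3 P$ for every $c \in \bC^*$. Hence $\dim \tilde H(P) = \dim H(P) + 1$, and the orbit–dimension theorem gives $\dim \Omega(P) = \dim \GL(W) - \dim \tilde H(P) = 80 - \dim H(P)$.

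For $\detdrei$, the Dieudonné description recalled just above yields a surjection $(\SL_3 \times \SL_3) \rtimes \bZ/2 \twoheadrightarrow H(\detdrei)$ with finite kernel, whence $\dim H(\detdrei) = 16$. For $P_1$, the line $L = \bC e_9$ is intrinsic: it is the set of $\ell \in W$ along which $P_1$ is translation-invariant, so every $a \in H(P_1)$ preserves~$L$. The induced action on $W/L$ stabilizes the determinant on traceless $3\times 3$ matrices, whose stabilizer in $\GL(W/L)$ is $\mathrm{PGL}_3$ acting by conjugation (up to the discrete transpose), of dimension~$8$; this restriction is surjective since any lift works. Its kernel consists of block maps fixing $W/L$, of dimension $\dim \GL(L) + \dim \mathrm{Hom}(W/L, L) = 1 + 8 = 9$. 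Hence $\dim H(P_1) = 17$.

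For $P_2$, I identify $W = V \oplus \mathrm{Sym}^2 V^*$ with $V = \bC^3$ so that $P_2(v, q) = q(v, v)$, and compute the Lie algebra $\mathfrak{h}(P_2)$ by linearizing $P_2 \circ (\mathrm{Id} + \varepsilon X) = P_2$ at $\varepsilon = 0$. Writing $X \in \mathfrak{gl}(W)$ in block form with respect to this decomposition and splitting the linearized identity by bi-degree in $(v, q)$ produces three constraints, of bi-degrees $(3,0)$, $(2,1)$, and $(1,2)$. The $(1,2)$ constraint forces the off-diagonal block $\mathrm{Sym}^2 V^* \to V$ to vanish, since a generic $q$ is non-degenerate and has trivial kernel. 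The $(2,1)$ constraint expresses the $\mathrm{Sym}^2 V^*$-diagonal block in terms of the $V$-diagonal block, so the diagonal contributes $\mathfrak{gl}(V)$ of dimension $9$. The $(3,0)$ constraint cuts out the other off-diagonal block $V \to \mathrm{Sym}^2 V^*$ as the kernel of the symmetrization map $V^* \otimes \mathrm{Sym}^2 V^* \to \mathrm{Sym}^3 V^*$, of dimension $18 - 10 = 8$. Hence $\dim H(P_2) = 9 + 8 = 17$. The main technical point is this bi-degree bookkeeping, though each individual constraint admits a short linear-algebra argument.
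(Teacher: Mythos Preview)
Your derivation of the general formula $\dim\Omega(P)=80-\dim H(P)$ is essentially the paper's: they apply the fiber-dimension theorem to the affine orbit map $\GL(W)\to\bC[W]_3$, getting $81-\dim H(P)$, and then projectivize; you map directly to $\bP(\bC[W]_3)$ and absorb the extra dimension into the projective stabilizer $\tilde H(P)$ via your exact sequence. These are the same argument in different bookkeeping.

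Where you genuinely diverge is in computing $\dim H(P_1)$ and $\dim H(P_2)$. The paper does not argue structurally at all: it observes that $T_1 H(P_i)=\{a\in\End(W):P_i(x+ta(x))=P_i(x)+O(t^2)\}$ is the nullspace of an explicit $165\times 81$ matrix and reports a computer computation. Your route is by hand. For $P_2$ the bidegree decomposition on $V\oplus\mathrm{Sym}^2V^*$ is clean and self-contained, and the three constraints you extract are correct. For $P_1$ your filtration through the translation-invariant line $L=\bC e_9$ is also correct, but the step ``the stabilizer of $\det$ on traceless $3\times3$ matrices in $\GL(W/L)$ is $\mathrm{PGL}_3$ up to transpose'' is asserted without proof or reference; it is true, but not entirely trivial (it does \emph{not} follow immediately from the Dieudonn\'e description of $H(\detdrei)$, since a priori $\det|_{\mathfrak{sl}_3}$ could have automorphisms not extending to $\mathfrak{gl}_3$), so that step carries the same ``trust an outside fact'' status as the paper's computer check. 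The trade-off is clear: your arguments explain \emph{why} the dimensions are $17$ and would adapt to analogues in other dimensions (the $P_2$ analysis, in particular, works verbatim for the universal degree-$d$ form in $n$ variables), whereas the paper's approach is uniform, short, and reduces everything to linear algebra.
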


\begin{proof}
  An easy application of the fiber dimension theorem to the
  map~$a \in \GL(W) \mapsto P\circ a \in \bC[W]_3$ gives that the dimension of the orbit of~$P$ in~$\bC[W]_3$
  is~$81 - \dim H(P)$.
  Since the projective orbit in~$\bP(\bC[W]_3)$ has one dimension less, the first claim follows.
  %The group~$H_P$ is generated by~$H'_P$ and the scalar matrices, which are not in~$H'_P$.
  %Thus~$\dim H_P = \dim H'_P +1$ and the claim follows.
  
  The stabilizer~$H(\detdrei)$ has dimension~$16$, hence~$\dim
  \Omega(\detdrei) = 64$.
  To compute the dimension of~$H(P_i)$, $1\leq i \leq 2$, one can compute
  the dimension of its Lie algebra defined as
  \[ T_1 H(P_i) = \left\{ a \in \End(W) \st P(x + t a(x)) = P(x) + O(t^2)  \right\}. \]
  It amounts to computing the nullspace of a $165\times 81$ matrix, which is easy using a computer.
\end{proof}

\begin{lemma}
  The boundary~$\partial\Omega(\detdrei)$ is pure of dimension~$63$.
  \label{lem:pure}
\end{lemma}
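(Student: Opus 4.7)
The strategy is to identify $\Omega(\detdrei)$ as an affine open subvariety of the irreducible projective variety $\overline{\Omega}(\detdrei)$, and then invoke the classical fact that the complement of an affine open in a complete variety is of pure codimension one.

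First, I would combine Lemma~\ref{lem:dimorbit} with the standard fact that orbits of algebraic group actions are locally closed to conclude that $\Omega(\detdrei)$ is open in $\overline{\Omega}(\detdrei)$, which is irreducible of dimension~$64$. As a consequence, $\partial\Omega(\detdrei)$ is a proper closed subvariety of $\overline{\Omega}(\detdrei)$, and every one of its irreducible components has dimension at most~$63$. This yields the upper bound.

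The key step is to show that $\Omega(\detdrei)$ is itself an affine variety. I would present it as a homogeneous space $\bP\GL(W)/K$, where $K$ denotes the image in $\bP\GL(W)$ of the stabilizer~$H(\detdrei)$. Using the description recalled before Lemma~\ref{lem:dimorbit}, $H(\detdrei)$ is an extension of $(\SL_3\times\SL_3)/\mu_3$ by $\bZ/2$, and in particular reductive; its image $K$ in $\bP\GL(W)$ is therefore also reductive. Matsushima's theorem then guarantees that $\bP\GL(W)/K$, and hence $\Omega(\detdrei)$, is affine.

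With affineness in hand, I would appeal to the classical fact that if $U$ is an affine open dense subvariety of an irreducible complete variety~$X$, then every irreducible component of $X\setminus U$ has codimension one in $X$. After normalizing $\overline{\Omega}(\detdrei)$ if necessary, the essential point is a Hartogs-type extension theorem: a component of codimension at least two could be deleted without changing the ring of regular functions, so $U$ would share its ring of global sections with the complete variety~$X$, forcing $U$ to be zero-dimensional, a contradiction. Applied to our situation, this gives the desired lower bound of~$63$ on the dimension of every irreducible component of $\partial\Omega(\detdrei)$.

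The main obstacle is the verification of affineness: one must correctly identify $\Omega(\detdrei)$ with a reductive homogeneous space so that Matsushima's theorem applies cleanly. The codimension-one statement is classical, and the nonnormal case reduces smoothly to the normal case via the normalization morphism, which is finite and hence preserves dimensions of irreducible components.
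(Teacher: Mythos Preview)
Your proposal is correct and follows essentially the same route as the paper's proof: both show that the orbit is an affine variety because its stabilizer is reductive (you cite Matsushima's theorem, the paper cites \textcite[\S4.2]{VinPop94}), and then conclude that the complement in the closure has pure codimension one. The only cosmetic differences are that the paper works with the affine orbit in~$\bC[W]_3$ under~$\GL(W)$ and projectivizes at the end, while you pass directly to~$\bP\GL(W)/K$, and you supply a Hartogs/normalization sketch where the paper simply cites EGA~IV~21.12.7.
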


\begin{proof}
  Let~$\Omega'(\detdrei)$ be the affine orbit of~$\detdrei$ in~$\bC[W]_3$ under the action of~$\GL(W)$.
  It is isomorphic to~$\GL(W)/H(\detdrei)$, which is an affine variety because~$H(\detdrei)$ is reductive \autocite[\S4.2]{VinPop94}.
  Therefore~$\Omega'(\detdrei)$ is an affine open subset of its closure, it follows
  that the complement of~$\Omega'(\detdrei)$ in its closure is pure of codimension~$1$~\autocite[Corollaire 21.12.7]{EGA-IV-4},
  and the same holds true after projectivization.
\end{proof}

Let~$\varphi$ be the rational map
\begin{equation}
  \varphi : [a]\in \bP(\End W) \dashrightarrow [\detdrei \circ a] \in \overline\Omega(\detdrei).
  \label{eq:defphi}
\end{equation}
Let also~$Z$ be the irreducible hypersurface of~$\bP(\End W)$
\[ Z \eqdef \left\{ [a]\in \bP(\End W) \st \det(a) = 0 \right\}. \]
Note the difference between~$\detdrei\circ a$, which is a regular function of~$W$, and~$\det(a)$, which is a scalar.
The indeterminacy locus of~$\varphi$ is a strict subset of~$Z$.
By definition, $\Omega(\detdrei) = \varphi(\bP(\End W) \setminus Z)$.
Let~$\varphi(Z)$ denote the image of the set of the points of~$Z$ where~$\varphi$ is defined.
%The closure of the image of~$\varphi$ is the orbit closure~$\Omega(\detdrei)$.

\begin{lemma}
  The closure~$\overline{\varphi(Z)}$ is an irreducible component of~$\partial \Omega(\detdrei)$.
  Furthermore~$\overline{\varphi(Z)} = \overline\Omega(P_1)$.
  %The closures~$\overline{\varphi(Z)}$ and~$\overline{\Omega(P_1)}$ are equal and they give an irreducible component of~$\partial \Omega(\detdrei)$.
  % the orbit closure of the determinant of the generic traceless matrix.
  \label{lem:orbit1}
\end{lemma}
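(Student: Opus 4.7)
The plan has four steps: (1) display $P_1$ as $\varphi([a_1])$ for an explicit rank-$8$ endomorphism $a_1$; (2) use $\GL(W)$-equivariance to get $\overline\Omega(P_1)\subseteq\overline{\varphi(Z)}$; (3) prove $\overline{\varphi(Z)}\subseteq\partial\Omega(\detdrei)$; (4) conclude by a dimension count via Lemmas~\ref{lem:dimorbit} and~\ref{lem:pure}.

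For (1) and (2), I would take $a_1\in\End W$ to be the linear map sending $(x_1,\dotsc,x_9)$ to $(x_1,\dotsc,x_8,-x_1-x_5)$. Its kernel is the line spanned by $e_9$, so $a_1$ has rank~$8$, $[a_1]\in Z$, and $\detdrei\circ a_1 = P_1$ by direct computation. The map $\varphi$ is equivariant for the right action of $\GL(W)$ on $\bP(\End W)$ given by $g\cdot[a]\eqdef[a\circ g]$, because $\varphi([a\circ g])=[(\detdrei\circ a)\circ g]$; and this action preserves $Z$ by multiplicativity of the determinant. Consequently the whole orbit $\Omega(P_1)$ lies in $\varphi(Z)$, so $\overline\Omega(P_1)\subseteq\overline{\varphi(Z)}$.

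For (3), any $[a]\in Z$ in the domain of $\varphi$ is a limit of invertible endomorphisms, so $\varphi(Z)\subseteq\overline\Omega(\detdrei)$ by continuity. Moreover $\varphi(Z)$ is disjoint from $\Omega(\detdrei)$: if $v\in\ker a\setminus\{0\}$ then $(\detdrei\circ a)(x+tv)=\detdrei(a(x))$, so $\detdrei\circ a$ is invariant under translation by the nonzero vector $v$, whereas $\detdrei\circ g$ for $g\in\GL(W)$ has no nonzero translation-invariant direction (an elementary property of the determinant). Since $\Omega(\detdrei)$ is open in $\overline\Omega(\detdrei)$, as shown in the proof of Lemma~\ref{lem:pure}, this disjointness passes to the closure: $\overline{\varphi(Z)}\subseteq\partial\Omega(\detdrei)$.

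For (4), $\overline{\varphi(Z)}$ is irreducible as the closure of the image of the irreducible variety $Z$. By Lemma~\ref{lem:pure} combined with (3), $\dim\overline{\varphi(Z)}\leq 63$; by (2) and Lemma~\ref{lem:dimorbit}, $\dim\overline{\varphi(Z)}\geq\dim\overline\Omega(P_1)=63$. Therefore $\overline{\varphi(Z)}$ is a $63$-dimensional irreducible component of $\partial\Omega(\detdrei)$ and, containing the irreducible variety $\overline\Omega(P_1)$ of the same dimension, coincides with it. I do not expect a serious obstacle; the delicate step is the translation-invariance argument in (3), which is precisely what makes $\overline{\varphi(Z)}$ a boundary component rather than part of the orbit itself.
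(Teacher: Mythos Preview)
Your proof is correct and follows essentially the same route as the paper: show $\varphi(Z)\subset\overline\Omega(\detdrei)$, show $\varphi(Z)\cap\Omega(\detdrei)=\varnothing$, then use irreducibility of~$Z$ together with the dimension of~$\Omega(P_1)$. Your translation-invariance criterion in step~(3) is exactly the paper's invariant~$\nu$ (the dimension of the span of the first partials) in different clothing, since a nonzero translation-invariant direction~$v$ is the same thing as a linear relation $\partial_v P=0$ among the partials; so the two arguments coincide.
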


\begin{proof}
  The closure~$\overline{\varphi(Z)}$ is clearly contained
  in~$\overline\Omega(\detdrei)$ since~$\GL(W)$ is dense in~$\End(W)$.  The
  image~$\varphi(Z)$ does not intersect~$\Omega(\detdrei)$: To show this, let us
  consider the function~$\nu : \bC[W]_3\to \bN$ which associates to~$P$ the
  dimension of the linear subspace of~$\bC[W]_2$ spanned by the partial
  derivatives~$\frac{\partial P}{\partial x_1}$,\,\dots\,, $\frac{\partial
  P}{\partial x_9}$. The function~$\nu$ is invariant under the action
  of~$\GL(W)$. % and is upper semi-continuous.
  Because every form in~$\varphi(Z)$
  can be written as a polynomial in at most~$8$ linear forms, $\nu(P)\leq 8$ for all~$P\in\varphi(Z)$.
  On the other hand, $\nu(\det_3)=9$ and so~$\nu(P) = 9$ for any~$P\in\Omega(\detdrei)$.
  This shows that~$\varphi(Z)\cap\Omega(\detdrei) = \varnothing$.
  Thus~$\overline{\varphi(Z)}$ is contained in the boundary~$\partial\Omega(\detdrei)$.
  Moreover~$\overline{\varphi(Z)}$ is irreducible because~$Z$ is.
  
  Clearly~$P_1 \in \varphi(Z)$ and by Lemma~\ref{lem:dimorbit}, $\Omega(P_1)$ has dimension~$63$.
  Since
  \[ \overline\Omega(P_1) \subset \overline{\varphi(Z)} \subset \partial\Omega(\detdrei), \]
  they all three have dimension~$63$ and $\overline\Omega(P_1) = \overline{\varphi(Z)}$
  because the latter is irreducible. This gives a component of~$\partial \Omega(\detdrei)$.
\end{proof}

\begin{lemma}
  The orbit closure~$\overline\Omega(P_2)$ is an irreducible component of~$\partial \Omega(\detdrei)$ and is distinct from~$\overline\Omega(P_1)$.
  \label{lem:orbit2}
\end{lemma}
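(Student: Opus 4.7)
The plan is to verify three facts: (a) $P_2 \in \overline\Omega(\detdrei) \setminus \Omega(\detdrei)$; (b) $\overline\Omega(P_2)$ is irreducible of dimension~$63$; (c) $P_2 \notin \overline\Omega(P_1)$. Combined with Lemma~\ref{lem:pure}, (a) and (b) will force $\overline\Omega(P_2)$ to be an irreducible component of $\partial\Omega(\detdrei)$, while (c) distinguishes it from $\overline\Omega(P_1)$.

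The main obstacle is~(a): exhibiting a curve in $\GL(W)$ along which $\detdrei$ degenerates projectively to $P_2$. For this, I take the two-term family $a_t = a_0 + t a_1 \in \End W$ with
\[ a_0 : X \mapsto \begin{pmatrix} 0 & x_1 & x_2 \\ -x_1 & 0 & x_3 \\ -x_2 & -x_3 & 0 \end{pmatrix}, \qquad a_1 : X \mapsto \begin{pmatrix} x_6 & -x_8 & x_9 \\ 0 & x_5 & -x_7 \\ 0 & 0 & x_4 \end{pmatrix}. \]
Since $a_0(X)$ is skew-symmetric of size~$3$, its determinant vanishes identically, so the $t^1$-coefficient of $\det(a_t(X))$ equals $\operatorname{tr}(\operatorname{adj}(a_0(X))\, a_1(X))$. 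A direct computation shows $\operatorname{adj}(a_0(X)) = v(X) v(X)^T$ with $v(X) = (x_3, -x_2, x_1)^T$, so this coefficient simplifies to $v^T a_1 v$, which expands to exactly $P_2$. For invertibility of $a_t$ when $t \neq 0$, the image of $a_0$ is the $3$-dimensional space of skew-symmetric matrices and the image of $a_1$ is the $6$-dimensional space of upper-triangular matrices; these are complementary subspaces of $W$, so $a_t$ has rank $9$ for $t \neq 0$. Hence $[\detdrei \circ a_t] \to [P_2]$ projectively as $t \to 0$, proving $P_2 \in \overline\Omega(\detdrei)$. Moreover $P_2 \notin \Omega(\detdrei)$ since $\dim \Omega(P_2) = 63 < 64 = \dim \Omega(\detdrei)$ by Lemma~\ref{lem:dimorbit}.

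Part~(b) is then immediate: $\overline\Omega(P_2)$ is the Zariski closure of a $\GL(W)$-orbit, hence irreducible, and has dimension $63$ by Lemma~\ref{lem:dimorbit}.

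For~(c), I reuse the $\GL(W)$-invariant function $\nu$ from the proof of Lemma~\ref{lem:orbit1}. The locus $\{\nu \leq 8\}$ is Zariski-closed and contains $\varphi(Z)$, so it contains $\overline\Omega(P_1) = \overline{\varphi(Z)}$; it therefore suffices to verify $\nu(P_2) = 9$. This is direct: $\partial P_2/\partial x_j$ for $j \in \{4, \dots, 9\}$ yield the six independent degree-$2$ monomials in $x_1, x_2, x_3$, while each $\partial P_2/\partial x_i$ for $i \in \{1, 2, 3\}$ is supported on mixed monomials $x_k x_\ell$ with $\ell \in \{4, \dots, 9\}$, and these three supports are pairwise disjoint. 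Hence the nine partial derivatives are linearly independent in $\bC[W]_2$, giving $\nu(P_2) = 9 > 8$ and $P_2 \notin \overline\Omega(P_1)$.
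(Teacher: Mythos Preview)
Your proof is correct and follows essentially the same route as the paper: a skew-symmetric degeneration via Jacobi's formula to put $[P_2]$ in $\overline\Omega(\detdrei)$, the dimension count from Lemma~\ref{lem:dimorbit} together with Lemma~\ref{lem:pure} to get an irreducible component, and the invariant~$\nu$ to separate it from~$\overline\Omega(P_1)$. The only cosmetic differences are that the paper perturbs by a symmetric matrix rather than an upper-triangular one, and the paper only checks $P_2\notin\Omega(P_1)$ whereas you prove the stronger $P_2\notin\overline\Omega(P_1)$.

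One small quibble: the sentence ``these are complementary subspaces of~$W$, so $a_t$ has rank~$9$'' is not a valid inference as written---complementary images alone do not force $a_0+ta_1$ to be invertible. What makes it work here is that the \emph{kernels} are also complementary (since $a_0$ depends only on $x_1,x_2,x_3$ and $a_1$ only on $x_4,\dotsc,x_9$); you should say this explicitly.
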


\begin{proof}
  We first prove that~$[P_2] \in \partial\Omega(\detdrei)$.
  Let
  \[ A = \begin{pmatrix} 0 & x_1 & -x_2 \\ -x_1 & 0 & x_3 \\ x_2 & - x_3 & 0.  \end{pmatrix}
    \text{ and }
    S = \begin{pmatrix} 2x_6 &  x_8 & x_9 \\ x_8 & 2x_5 & x_7 \\ x_9 & x_7 & 2x_4 \end{pmatrix}.
    \]
  By Jacobi's formula, $\det(A + tS) = \det A + \mop{Tr}( \mop{adj}(A) S ) t + o(t)$,
  where~$\mop{adj}(A)$ is
  the adjugate matrix of~$A$, which equals~$u^T u$ with~$u= (x_3, x_2, x_1)$.
  Since~$\det(A) = 0$, the projective class of the polynomial~$\det(A+tS)$
  tends to~$[\mop{Tr}( \mop{adj}(A) S )]$ when~$t\to 0$, and by construction, this limit is a point in~$\overline \Omega(\detdrei)$.
  Besides
  \[ \mop{Tr}( \mop{adj}(A) S ) = u S u^T = 2P_2, \] 
  thus~$[P_2] \in \overline\Omega(\detdrei)$.
  Yet~$[P_2]$ is not in~$\Omega(\detdrei)$, because its orbit has dimension~$63$, by Lemma~\ref{lem:dimorbit}, whereas the orbit of every point of~$\Omega(\detdrei)$
  is~$\Omega(\detdrei)$ itself.
  Therefore~$[P_2]$ is in the boundary~$\partial\Omega(\detdrei)$.
  Since~$\Omega(P_2)$ has dimension~$63$, this gives a compoment of~$\partial\Omega(\detdrei)$.
  It remains to show that~$[P_2]$ is not in~$\Omega(P_1)$, and indeed~$\nu(P_2) = 9$ whereas~$\nu(P_1) =8$, where~$\nu$ is the function introduced in the proof of Lemma~\ref{lem:orbit1}.
\end{proof}

Note that Lemma~\ref{lem:orbit2} generalizes to higher dimensions: the limit of the determinant on the space of skew-symmetric matrices always leads to a component of the boundary of the orbit of~$\det_n$, when~$n \geq 3$ is odd, as shown by \textcite[Prop.~3.5.1]{landsberg_hypersurfaces_2013}.

\section{There are only two components}
\label{sec:only2}

Let~$E$ denote~$\End(W)$ and recall the rational map~$\varphi : \bP(E) \dashrightarrow
\overline\Omega(\detdrei)$ defined in~\eqref{eq:defphi}. Let~$B \subset \bP(E)$
denote the indeterminacy locus of~$\varphi$, that is, the set of
all~$[a]\in\bP(E)$ whose image~$a(W) \subset W$ contains only singular
matrices. The locus $B$ is a subset of~$Z$ because every~$a$ not in~$Z$ is surjective and thus
has invertible matrices in its image.
One way to describe the orbit closure~$\overline\Omega(\detdrei)$ is
to give a resolution of the indeterminacies of the rational map~$\varphi$, that
is a, projective birational morphism~$\rho : X\to \bP(E)$ such that~$\varphi\circ \rho$
is a regular map. In this case, the regular map~$\varphi\circ\rho$ is projective
and therefore its image is closed and equals~$\overline\Omega(\detdrei)$.
As we will see, it is actually enough to resolve the indeterminacies of~$\varphi$ on some open subset of~$\bP(E)$.

Let~$H = H(\detdrei) \subset \GL(W)$ denote the stabilizer of~$\detdrei$
described above.  The group~$H$ acts on~$\bP(E)$ by left multiplication and the
rational map~$\varphi$ is~$H$-invariant: for~$a\in \End(W)$ and~$h\in H$,
$ \varphi( [ha] ) = [\detdrei \circ h \circ a] = \varphi([a])$. 
Let~$\bP(E)^\sst$ be the open subset of all semistable points in~$\bP(E)$ under the action of~$H$, that is the
set of all~$[a]\in \bP(E)$ such that there exists a non-constant homogeneous
$H$-invariant regular function~$f\in \bC[E]^H$ on~$E$ such that~$f(a)\neq
0$.  Equivalently \autocite[\S4.6]{VinPop94}, the complement of~$\bP(E)^\sst$ is the set of
all~$[a] \in \bP(E)$ such that~$0$ is in the closure of~$Ha$ in~$E$.
Let~$X$ be the closure in~$\bP(E)^\sst \times \overline\Omega(\detdrei)$ of the
graph of the rational map~$\varphi$, namely
\[ X \eqdef \mop{Closure}\left\{ \left( [a], [P] \right) \in \bP(E)^\sst \times \overline\Omega(\detdrei) \st [P] = [\detdrei \circ a] \right\}. \]
Let~$\rho: X\to \bP(E)^\sst$ denote the first projection.
By construction, it is the blowup of~$\bP(E)^\sst$ along the ideal sheaf defined by the
condition~$\detdrei \circ a = 0$, whose support is the indeterminacy
locus~$B\cap \bP(E)^\sst$. (The condition~$\detdrei \circ a = 0$ expands into 165~homogeneous polynomials of degree~3
in the 81~coordinates of~$a$.)

The variety~$X$ also carries a regular map~$\psi :
X\to\overline\Omega(\detdrei)$ given by the second projection. By construction,
it resolves the indeterminacies of~$\varphi$ on~$\bP(E)^\sst$: the
rational map~$\varphi\circ\rho : X\to \overline\Omega(\detdrei)$ extends to a regular map which equals~$\psi$.

\begin{lemma}
  $\psi(X) = \overline \Omega(\detdrei)$.
  \label{lem:close-image}
\end{lemma}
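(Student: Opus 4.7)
My plan is to prove $\psi(X) = \overline\Omega(\detdrei)$ by establishing two facts: $\Omega(\detdrei) \subseteq \psi(X)$, and $\psi(X)$ is closed in $\overline\Omega(\detdrei)$. Combined with the density of $\Omega(\detdrei)$ in $\overline\Omega(\detdrei)$, this forces the desired equality, the reverse inclusion $\psi(X) \subseteq \overline\Omega(\detdrei)$ being immediate from the construction of $X$.

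For the first fact, I verify that $\bP(E) \setminus Z \subseteq \bP(E)^\sst$. The key observation is that $a \mapsto \det(a)^2$ is a non-constant homogeneous $H$-invariant function on $E$: the generators of $H$ act on $E$ by left multiplication and lie in $\SL(W)$ up to sign, since $A \mapsto UAV$ for $U,V \in \SL_3$ acts on $W = \bC^{3\times 3}$ with determinant $(\det U)^3(\det V)^3 = 1$, and the transpose has determinant $-1$ on $W$. The non-vanishing locus of $\det^2$ is exactly $\bP(E) \setminus Z$, so every $[a_0] \in \bP(E) \setminus Z$ is semistable. The graph point $([a_0], [\detdrei \circ a_0])$ then lies in $X$, giving $\psi(X) \supseteq \varphi(\bP(E) \setminus Z) = \Omega(\detdrei)$.

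For the closedness, I use GIT. The set $X$ is $H$-stable in $\bP(E)^\sst \times \overline\Omega(\detdrei)$, for the action of $H$ on the first factor and the trivial action on the second, since the graph of the $H$-invariant map $\varphi$ is $H$-stable and closure preserves this. Because $H$ is reductive (a fact already used in Lemma~\ref{lem:pure}) and $\bP(E)$ is projective, the GIT quotient $\bP(E)^\sst \dquo H$ is projective, and therefore so is $(\bP(E)^\sst \dquo H) \times \overline\Omega(\detdrei)$. The image of the closed $H$-stable set $X$ under the good quotient is a closed subvariety $X \dquo H$, hence projective, and $\psi$ factors as $X \to X \dquo H \to \overline\Omega(\detdrei)$, the last arrow being proper as a morphism between projective varieties. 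Its image, which equals $\psi(X)$, is therefore closed.

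The main obstacle is this closedness step. Since $\bP(E)^\sst$ is only quasi-projective, $\psi$ is not automatically proper just from projectivity of the target, so one cannot conclude directly. Invariant theory for the reductive group $H$ is what supplies the missing projectivity, by passing to a GIT quotient through which $\psi$ factors; the two other steps are essentially routine, relying on the explicit invariant $\det^2$ and on the orbit being dense in its closure.
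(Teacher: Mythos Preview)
Your proof is correct and follows essentially the same route as the paper: both reduce to showing that $\psi(X)$ is closed, and both obtain this by passing to the GIT quotient by the reductive group $H$ (the paper quotients $\bP(E)^\sst \times \bP(\bC[W]_3)$, you quotient $\bP(E)^\sst$ and then take the product---equivalent since $H$ acts trivially on the second factor). The one place you add something is the explicit verification, via the invariant $\det(a)^2$, that $\bP(E)\setminus Z \subset \bP(E)^\sst$; the paper silently assumes this when asserting $\Omega(\detdrei)\subset\psi(X)$.
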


\begin{proof}
  The image of~$\varphi$, which is~$\Omega(\detdrei)$, is included in~$\psi(X)$
  and~$\psi(X) \subset \overline\Omega(\detdrei)$.
  Thus, it is enough to show that~$\psi(X)$ is closed.

  Let~$T$ be the projective variety~$\bP(E)\times \bP(\bC[W]_3)$. The group~$H$
  acts on~$T$ by~$h\cdot (a,P) = (h\cdot a, P)$.  Let~$T^\sst$ the open subset
  of semi-stable points for this action; clearly~$T^\sst = \bP(E)^\sst\times
  \bP(\bC[W]_3)$.  The GIT quotient~$T^\sst\dquo H$ is a projective variety and
  the canonical morphism~$\pi : T^\sst \to T^\sst\dquo H$ maps~$H$-invariant
  closed subsets to closed subsets \autocite[e.g.][\S4.6]{VinPop94}, in
  particular~$\pi(X)$ is closed.  Moreover, the map~$\psi$ is~$H$-invariant so
  it factors as~$\psi'\circ\pi$ for some regular map~$\psi': T^\sst \dquo H
  \to \bP(\bC[W]_3)$.  The image~$\pi(X)$ is closed in the projective variety~$T^\sst\dquo H$
  thus~$\psi'(\pi(X))$ is closed.  This proves the claim since the latter is
  just~$\psi(X)$.
\end{proof}

The construction of~$X$ follows a general method to resolve the indeterminacies
of a rational map, and as such, it gives little information.  In fact~$X$ is a
blowup of~$\bP(E)^\sst$ along a smooth variety.

First of all, the indeterminacy locus~$B$ is precisely known, thanks to the classification of the maximal linear subspaces of~$E$
containing only singular matrices \autocites{Atk83}{FLR85}{EisHar88}.
Let~$H^0$ denote the connected component of~$1$ in~$H$ ---~due to the
transposition map, $H$ has two components.
For every~$[a]\in B$, there is a~$h \in
H^0$ such that~$(ha)(W)$ is a subset of one of the following spaces of
singular matrices:
\[
\begin{pmatrix}
* & * & * \\
* & * & * \\
0 & 0 & 0
\end{pmatrix},\ 
\begin{pmatrix}
* & * & 0 \\
* & * & 0 \\
* & * & 0 
\end{pmatrix},\ 
\begin{pmatrix}
* & * & * \\
* & 0 & 0 \\
* & 0 & 0
\end{pmatrix} \text{ and } 
\begin{pmatrix}
0 & \alpha & -\beta \\
-\alpha & 0 & \gamma \\
\beta & -\gamma & 0
\end{pmatrix},\ \alpha,\beta,\gamma\in\bC.
\]
The first three are called \emph{compression spaces}, and the fourth is the space of~$3\times3$ skew-symmetric matrices, denoted~$\Lambda_3$.
They give four  components of~$B$. Let~$B_1$, $B_2$, $B_3$ and~$\Bsk$ denote them, respectively.
For example
\[ \Bsk = \left\{ [a]\in\bP(E) \st \exists U,V\in \SL(\bC^3) : \forall p\in W : U a(p) V \in \Lambda_3 \right\}. \]

\begin{lemma}
  We have~$B \cap \bP(E)^\sst = \Bsk \cap \bP(E)^\sst \neq \varnothing$.
  \label{lem:sscomp}
\end{lemma}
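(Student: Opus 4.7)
The proof splits into two parts: showing $B_i\cap\bP(E)^\sst=\varnothing$ for $i=1,2,3$, and exhibiting a point of $\Bsk\cap\bP(E)^\sst$. Both are handled by the Hilbert--Mumford criterion applied to the reductive group $H^0\simeq(\SL(\bC^3)\times\SL(\bC^3))/\mu_3$, with diagonal maximal torus $T$.

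For the compression components, any $[a]\in B_i$ is $H^0$-equivalent to an element $a'$ whose image lies in the $i$-th standard compression space. For each such space we write down a one-parameter subgroup of $T$ assigning strictly positive weight to every position supporting the compression: $\lambda(t)=(\mop{diag}(t,t,t^{-2}),I)$ kills $B_1$; its right-multiplication analogue $(I,\mop{diag}(t,t,t^{-2}))$ kills $B_2$; and $\lambda(t)=(\mop{diag}(t^2,t^{-1},t^{-1}),\mop{diag}(t^2,t^{-1},t^{-1}))$ kills $B_3$. In every case $\lim_{t\to 0}\lambda(t)\cdot a'=0$, so $[a]$ is unstable by Hilbert--Mumford.

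For the second part, let $a_0\in E$ be the linear map defined by the skew matrix $A$ from the proof of Lemma~\ref{lem:orbit2}; its image is precisely $\Lambda_3$, so $[a_0]\in\Bsk$ is automatic. By Hilbert--Mumford, after conjugating 1-PS into $T$, to prove $[a_0]\in\bP(E)^\sst$ it suffices to show that $g\cdot a_0$ is $T$-semistable for every $g\in H^0$. Writing $g(M)=UMV$ with $U,V\in\SL(\bC^3)$, the image of $g\cdot a_0$ is $U\Lambda_3 V$, and the set $S_g$ of $T$-weights appearing in $g\cdot a_0$ is exactly the set of positions $(i,j)\in\{1,2,3\}^2$ at which some element of $U\Lambda_3 V$ has a nonzero entry. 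By the Birkhoff--Hall characterization of torus semistability, $g\cdot a_0$ is $T$-semistable iff $S_g$ contains a perfect matching in the bipartite graph on $\{1,2,3\}\sqcup\{1,2,3\}$.

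The main obstacle, and the heart of the proof, is then the claim that $S_g$ always contains a perfect matching. By König's theorem, the contrary would yield a vertex cover $R\sqcup C$ of $S_g$ with $|R|+|C|\leq 2$. If $|R|=2$, letting $u$ be the row of $U$ indexed by $\{1,2,3\}\setminus R$, we have $uMV=0$ for every $M\in\Lambda_3$, and since $V$ is invertible, $uM=0$ for every skew $M$; writing $uM=u\times m$ with $m$ the axial vector of $M$, this forces $u=0$, contradicting $U$ invertible. The case $|C|=2$ is symmetric. In the remaining case $|R|=|C|=1$, letting $u_1,u_2$ (resp.\ $v_1,v_2$) denote the two rows of $U$ (resp.\ columns of $V$) outside the cover, we must have $u_kMv_l=0$ for every skew $M$ and every $k,l\in\{1,2\}$; since $u_kMv_l=\det[u_k,m,v_l]$ where $m$ is the axial vector of $M$, this forces each $u_k$ to be parallel to each $v_l$, so $v_1\parallel v_2$, again contradicting $V\in\SL(\bC^3)$. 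Hence $S_g$ contains a perfect matching for every $g$, and $[a_0]\in\Bsk\cap\bP(E)^\sst$.
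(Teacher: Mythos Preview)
Your treatment of the compression components $B_1$, $B_2$, $B_3$ is essentially the paper's own: both exhibit the same one-parameter subgroups of the diagonal torus driving every element of the standard compression space to~$0$.

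For the nonemptiness of $\Bsk\cap\bP(E)^\sst$, however, you take a genuinely different route. The paper writes down an explicit $H$-invariant polynomial on~$E$, namely
\[
a\ \longmapsto\ \tau(a)+\tau(Ta),\qquad \tau(a)=\mop{Tr}\big(a(p_1)\,\mop{adj}(a(p_2))\,a(p_3)\,\mop{adj}(a(p_1+p_2+p_3))\big),
\]
and checks by direct computation (for generic $p_i$) that it does not vanish at the skew point~$b$. You instead run Hilbert--Mumford: semistability of $[a_0]$ for $H^0$ is reduced to $T$-semistability of every $g\cdot a_0$, which via the Birkhoff description of the weight polytope becomes the combinatorial claim that the support of $U\Lambda_3V$ in the $3\times 3$ grid always contains a permutation pattern. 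Your König-plus-triple-product argument for this claim is correct and rather elegant. The paper's approach is shorter and produces a concrete invariant one can evaluate; yours is computation-free and explains structurally \emph{why} no destabilizing one-parameter subgroup exists.

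One small point worth making explicit: semistability in the paper is taken with respect to~$H$, whereas your Hilbert--Mumford argument runs through the connected group~$H^0$. Since the transposition~$T$ satisfies $T\cdot a_0=-a_0$, the $H$-orbit and the $H^0$-orbit of $a_0$ have the same closure, so $H^0$-semistability of $[a_0]$ does give $H$-semistability; but this deserves a sentence.
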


\begin{proof}
  It is easy to check that the three matrices
  \[%\begin{multline*}
    \left(\begin{smallmatrix}
      t & 0 & 0 \\
      0 & t & 0 \\
      0 & 0 & t^{-2}
    \end{smallmatrix}\right)
    \left(\begin{smallmatrix}
      * & * & * \\
      * & * & * \\
      0 & 0 & 0
    \end{smallmatrix}\right),\ 
    \left(\begin{smallmatrix}
      * & * & 0 \\
      * & * & 0 \\
      * & * & 0
    \end{smallmatrix}\right)
    \left(\begin{smallmatrix}
      t & 0 & 0 \\
      0 & t & 0 \\
      0 & 0 & t^{-2}
    \end{smallmatrix}\right),\ 
    \left(\begin{smallmatrix}
      t^2 & 0 & 0 \\
      0 & t^{-1} & 0 \\
      0 & 0 & t^{-1}
    \end{smallmatrix}\right)
    \left(\begin{smallmatrix}
      * & * & * \\
      * & 0 & 0 \\
      * & 0 & 0
     \end{smallmatrix}\right)
     \left(\begin{smallmatrix}
      t^2 & 0 & 0 \\
      0 & t^{-1} & 0 \\
      0 & 0 & t^{-1}
    \end{smallmatrix}\right) 
  \]%\end{multline*}
all tend to~$0$ when~$t\to 0$, for any constants~$*$.
This proves that~$B_1$, $B_2$ and~$B_3$ do not meet~$\bP(E)^\sst$.

To show that~$B\cap\bP(E)^\sst$ is not empty, pick any three points~$p_1$, $p_2$ and~$p_3$ in~$W$.
The function
\[ \tau : a \in E \mapsto \mop{Tr}\big( a(p_1) \cdot \mop{adj}( a(p_2) ) \cdot a(p_3) \cdot \mop{adj}( a(p_1 +p_2 +p_3) ) \big) \in \bC, \]
is~$H^0$-invariant: if~$h\in H$ is the map~$A\mapsto UAV$, for some~$U,V\in \SL(\bC^3)$,
then
\begin{align*}
  \tau(h a) &= \mop{Tr}\big( U a(p_1) V \cdot V^{-1} \mop{adj}( a(p_2) ) U^{-1} \cdot Ua(p_3)V \cdot V^{-1}\mop{adj}( a(p_1 +p_2 +p_3)) U^{-1} \big),
\end{align*}
which equals~$\tau(a)$.
It follows that the function~$a\mapsto \tau(a)+\tau(T a)$ is~$H$-invariant, where~$T : A \mapsto A^T$ is the transposition map.
Consider the
function~$b : W \to W$
defined by
\begin{equation}
b = \left(\begin{smallmatrix}
0 & x_1 & -x_2 \\
-x_1 & 0 & x_3 \\
x_2 & -x_3 & 0
\end{smallmatrix}\right),\
  \label{eq:defb}
\end{equation}
where the~$x_i$'s are linear forms~$W\to \bC$.
This gives a point~$[b]$ in~$\Bsk$ 
If the points~$p_i$'s are generic, then a simple computation shows that~$\tau(b)+\tau(Tb)\neq 0$.
\end{proof}

\begin{lemma}
  The subvariety~$\Bsk\cap \bP(E)^\sst$ is smooth and~$\rho : X\to \bP(E)^\sst$ is the blowup of~$\bP(E)^\sst$ along it.
  \label{lem:reduced}
\end{lemma}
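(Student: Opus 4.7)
The plan is to reduce both assertions to a single local computation, using $H^0$-equivariance. Both the subvariety~$\Bsk\cap\bP(E)^\sst$ and the ideal sheaf~$\mathcal{J}\subset\mathcal{O}_{\bP(E)^\sst}$ generated by the~$165$ coefficients of~$\detdrei\circ a$ (as a cubic form in~$W$ with coefficients in~$\bC[E]$) are invariant under the left multiplication action of~$H^0=\SL(\bC^3)\times\SL(\bC^3)$, so it suffices to check (a)~smoothness of~$\Bsk$ and (b)~the equality $\mathcal{J}=\mathcal{I}_{\Bsk}$ at one representative of each $H^0$-orbit. By the classification recalled just before the lemma, every point of~$\Bsk$ is $H^0$-conjugate to an element of the linear subspace~$Y\eqdef\left\{a\in E \st a(W)\subset\Lambda_3\right\}$, which has dimension~$27$ and meets~$\bP(E)^\sst$ by Lemma~\ref{lem:sscomp}. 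Pick a semistable~$a_0\in Y$, for example the map~$b$ of~\eqref{eq:defb}.

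Since $a_0(p)\in\Lambda_3$ is singular for every $p\in W$, Jacobi's formula gives, for $a=a_0+a_1$ with $a_1\in E$,
\[ \detdrei(a(p)) = \mop{Tr}\bigl(\mop{adj}(a_0(p))\cdot a_1(p)\bigr) + O(\lVert a_1\rVert^2), \]
so the linear parts at~$a_0$ of the generators of~$\mathcal{J}$ are exactly the coefficients (in~$p$) of the cubic form $L(a_1)(p)\eqdef\mop{Tr}(\mop{adj}(a_0(p))\,a_1(p))$, where $L\colon E\to\bC[W]_3$ is a linear map. For $a_0=b$, the calculation of $\mop{adj}(b(p))$ carried out in the proof of Lemma~\ref{lem:orbit2} reduces $L(a_1)=0$ to the condition $u(p)\,a_1(p)\,u(p)^T=0$ for all $p\in W$, with $u=(x_3,x_2,x_1)$. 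On the other hand, $\Bsk=\overline{H^0\cdot\bP(Y)}$, so the tangent space of~$\Bsk$ at~$a_0$ (lifted to~$E$) equals $T\eqdef Y+\mathrm{Lie}(H^0)\cdot a_0$, the second summand being the image of the derivative at the identity of the orbit map $h\mapsto h\cdot a_0$.

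The central step and main obstacle is to verify $\ker L=T$. The inclusion $T\subset\ker L$ is automatic: shifting $a_0$ within~$Y$ keeps $a(W)\subset\Lambda_3$, while shifting it along the $H^0$-orbit preserves the identity $\detdrei\circ a\equiv 0$ by $H^0$-invariance of~$\varphi$, so~$L$ vanishes on both summands. The reverse inclusion $\ker L\subset T$ requires ruling out unexpected elements: for $a_0=b$ this is an explicit linear-algebra statement about the rank of $a_1\mapsto u\,a_1\,u^T$ on the~$81$-dimensional space~$E$, which can be checked directly in coordinates and then propagated to all semistable representatives by the $H^0$-action. Once $\ker L=T$ is in hand, the scheme $V(\mathcal{J})$ has Zariski tangent space~$T$ at~$a_0$ of the same dimension as its underlying set~$\Bsk$ (computed from the parametrization $H^0\times\bP(Y)\to\Bsk$), so $V(\mathcal{J})$ is smooth and reduced at~$a_0$; this forces $\mathcal{J}=\mathcal{I}_{\Bsk}$ near~$a_0$. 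By $H^0$-equivariance this holds globally on~$\bP(E)^\sst$: therefore $\Bsk\cap\bP(E)^\sst$ is smooth and $\rho$ is the blowup along it.
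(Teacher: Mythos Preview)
Your strategy matches the paper's, but the equivariance step has a real gap. You correctly note that it suffices to verify $\mathcal{J}=\mathcal{I}_{\Bsk}$ at one representative of each $H^0$-orbit, and you reduce to~$Y$; but you then carry out the computation only at $a_0=b$ and assert that this ``propagates to all semistable representatives by the $H^0$-action.'' The left $H^0$-action is \emph{not} transitive on $\Bsk\cap\bP(E)^\sst$: distinct rank-$3$ elements of~$Y$ (for instance $b$ and $b\circ g$ for generic $g\in\GL(W)$) typically lie in different $H^0$-orbits, so the verification at~$b$ says nothing about them. As written, your argument establishes smoothness only on the $H^0$-orbit of~$[b]$.

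The paper closes this gap by also using the \emph{right} $\GL(W)$-action on~$\bP(E)$, under which $\mathcal{J}$ is equally invariant (since $\detdrei\circ(ag)=(\detdrei\circ a)\circ g$). It then shows that $\Bsk\cap\bP(E)^\sst$ is a \emph{single} orbit under the combined $H\times\GL(W)$-action: for $[a]\in\Bsk\cap\bP(E)^\sst$ with $a(W)\subset\Lambda_3$, the rank of~$a$ must be exactly~$3$, because a rank~$\leq 2$ image would lie in a compression space \parencite{BurDra06} and hence be unstable by Lemma~\ref{lem:sscomp}; thus $a=bg$ for some $g\in\GL(W)$. Once transitivity is established, a single tangent-space comparison at~$[b]$ suffices. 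Your description $T=Y+\mathrm{Lie}(H^0)\cdot b$ coincides with the paper's $\{mb+bc : m\in T_1H,\ c\in\End(W)\}$, since $Y=\{b\circ c : c\in\End(W)\}$; so after adding the right $\GL(W)$-action and the rank-$3$ argument, your proof goes through.
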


\begin{proof}
  Let~$\cI$ be the ideal sheaf generated by the condition~$\detdrei \circ a =
  0$.  its support is clearly~$B\cap \bP(E)^\sst$, which is also~$\Bsk\cap
  \bP(E)^\sst$, by Lemma~\ref{lem:sscomp}.  By definition, $X$ is the blowup
  of~$\bP(E)^\sst$ along~$\cI$.  By contrast, the blowup of~$\bP(E)^\sst$
  along~$\Bsk\cap \bP(E)^\sst$ is defined to be the blowup of the \emph{reduced}
  ideal sheaf whose support is~$\Bsk\cap \bP(E)^\sst$.
  Thus, it is enough to check that~$\cI$ is smooth (which implies reduced).  Let~$[b]\in \Bsk$ be the point
  defined in~\eqref{eq:defb}.
  
  We first observe that~$\Bsk\cap \bP(E)^\sst = [H \cdot b \cdot \GL(W)]$, the orbit
  of~$[b]$ under the left action of~$H$ and the right action of~$\GL(W)$ by multiplication.  The
  right-to-left inclusion is clear because the left-hand side is invariant
  under both actions and contains~$[b]$.  Conversely, let~$[a]\in \Bsk\cap \bP(E)^\sst$. By
  definition of~$\Bsk$, we may assume that the image of~$a$ is included in~$\Lambda_3$, up to replacing~$a$ by another point in its orbit~$Ha$.  If the
  image of~$a$ had dimension~$2$ or less, then~$a$ would also lie in some of
  the~$B_i$'s, $1\leq i\leq 3$ \autocite{BurDra06}.\footnote{\textcite[Theorem~2 and the discussion above it]{BurDra06}, states that a subspace of~$E$ of dimension~$2$ containing only singular matrices is contained in a compression space.}
  Since~$[a]\in \bP(E)^\sst$, Lemma~\ref{lem:sscomp} ensures that~$a$ is not in
  one of the~$B_i$'s, thus~$a$ has rank~$3$ and its image is~$\Lambda_3$. Then
  there is a~$g\in\GL(W)$ such that~$a= bg$, and thus~$a \in H \cdot b \cdot
  \GL(W)$.
  
  %Therefore, $\Bsk\cap \bP(E)^\sst$ is an orbit under a group action;
  %in particular, it is smooth.

  Regarding the smoothness,
  since~$\cI$ is invariant under the action
  of~$H$ and~$\GL(W)$ and since~$\Bsk\cap \bP(E)^\sst$ is an orbit under the same action,
  it is enough to check that~$\cI$ 
  is smooth at one point, say~$[b]$.
  By the Jacobian criterion \autocite[\S V.3]{EH00}, it is enough to check that the dimension of the tangent space
  \[ T = \left\{ c \in T_{[b]}\bP(E) \st \forall p\in W, \det(b(p) + t c(p)) = O(t^2) \right\}, \]
  equals the dimension of~$\Bsk$ at~$[b]$.
  The dimension of~$T$ is easily computed using a computer: it is equal to~34.
  To compute the dimension of~$\Bsk$, we use again the fact that it is an orbit under a group action: it is smooth and the tangent space at~$[b]$ equals
  \begin{align*}
    T_{[b]} \Bsk &= \left\{ m b + b c \st m\in T_1H,\ c\in T_1 \GL(W) \right\} \subset T_{[b]}\bP(E)\\
    &= \left\{ p \in W \mapsto M b(p) + b(p) N + b(c(p)) \in W \st M,N \in W,\ c\in\End(W) \right\}.
  \end{align*}
  Using a computer, we find that this space has also dimension~$34$, which terminates the proof.
\end{proof}

\begin{proof}[Proof of Theorem~\ref{theo:main}]
  Let~$D$ be the inverse image of the hypersurface~$Z$ by the blowup~$\rho$.
  $D$ is a hypersurface with exactly two irreducible components
  because~$\bP(E)$ is smooth and because the center of the blowup~$\rho$ is
  also smooth and included in~$Z$ \autocite[Lecture~7]{Ha95}. Respectively, the two
  components are the exceptional divisor~$\rho^{-1}(\Bsk)$ and the
  strict transform of~$Z$, i.e. the closure of~$\rho^{-1}(Z \setminus
  \Bsk)$.

  On the other hand~$\psi(X\setminus D) = \varphi(\GL(W)) = \Omega(\detdrei)$,
  thus~$\partial\Omega(\detdrei) \subset \psi(D)$, by Lemma~\ref{lem:close-image}.
  This proves that~$\partial\Omega(\detdrei)$ has at most two components:
  The components found in~\S\ref{sec:constr} are the only ones.\footnote{Though it is not necessary,
    we check easily that the image of the exceptional divisior
  is~$\overline\Omega(P_2)$ while the image of the strict transform of~$Z$
  gives~$\overline\Omega(P_1)$.
} This finishes the proof of Theorem~\ref{theo:main}.
\end{proof}

%\section{Conclusion}

\raggedright
\printbibliography

\end{document}